\title{A nonuniform Littlewood-Offord inequality for all norms}
\author{ Kyle Luh
	\thanks{Department of Mathematics, University of Colorado Boulder. Email: kyle.luh@colorado.edu.} \and David Xiang \thanks {Harvard University. Email: davidxiang@college.harvard.edu.}
}
\date{\today}
\theoremstyle{plain}
\newtheorem{theorem}{Theorem}[section]
\newtheorem{lemma}[theorem]{Lemma}
\newtheorem{proposition}[theorem]{Proposition}
\newtheorem{conjecture}[theorem]{Conjecture}
\newtheorem{definition}[theorem]{Definition}
\newcommand{\R}{\mathbb{R}}
\newcommand{\eps}{\varepsilon}
\renewcommand{\P}{\mathbb P}
\newcommand{\Bv}{\mathbf{v}}
\newcommand{\Bu}{\mathbf{u}}
\newcommand{\Bx}{\mathbf{x}}
\newcommand{\By}{\mathbf{y}}
\newcommand{\Ba}{\mathbf{a}}
\begin{document}
	\maketitle
	
	\begin{abstract}
		Let $\Bv_i$ be vectors in $\R^d$ and $\{\eps_i\}$ be independent Rademacher random variables.  Then the Littlewood-Offord problem entails finding the best upper bound for $\sup_{\Bx \in \R^d} \P(\sum \eps_i \Bv_i = \Bx)$.  
		Generalizing the uniform bounds of Littlewood-Offord, Erd\H{o}s and Kleitman, a recent result of Dzindzalieta and Ju\v{s}kevi\v{c}ius provides a non-uniform bound that is optimal in its dependence on $\|\Bx\|_2$.  In this short note, we provide a simple alternative proof of their result.  Furthermore, our proof demonstrates that the bound applies to any norm on $\R^d$, not just the $\ell_2$ norm.  This resolves a conjecture of Dzindzalieta and Ju\v{s}kevi\v{c}ius. 
	\end{abstract}

	\section{Introduction}
	Let $\{\eps_k\}_{k=1}^n$ be independent Rademacher random variables (i.e. $\P(\eps_k = 1) = 1/2$ and $\P(\eps_k = -1) = 1/2$).  We let $R_n$ denote the sum of these random variables.  In their study of random polynomials, Littlewood and Offord \cite{LO1943roots} encountered the following problem.  What is the best bound on $\P(\sum_{i=1}^n a_i \eps_i = x)$ with $|a_i| \leq 1$.  Littlewood and Offord established that $\max_x \P(\sum a_i \eps_i = x) = O(\log n/ n^{1/2})$ for all $a_i$ such that $|a_i| \leq 1$.  \cite{LO1943roots}.  With a short, insightful argument, Erd\H{o}s \cite{erdos1945LO} established the optimal bound 
	\begin{equation} \label{eq:erdosLO}
	\rho(\Ba) :=\max_x \P\left(\sum_{i=1}^n a_i \eps_i = x \right) \leq \frac{\binom{n}{\lfloor n/2 \rfloor}}{2^n} = O(n^{-1/2}).
	\end{equation}

	The results of Littlewood, Offord and Erd\H{o}s attracted the attention of many researchers and numerous variants of the Littlewood-Offord problem have been proposed and investigated.  Erd\H{o}s and Moser showed that an improved bound held when all the $a_i$ are distinct \cite{erdos1947moser}.  Later, S\'ark\H{o}zy and Szemer\'edi obtained the optimal bound for distinct $a_i$.  Many more results were obtained when considering more complex arithmetic structure of the $a_i$'s \cite{stanley1980distinct, halasz1977combinatorial, rv2008invertibility}.  
	In a different direction, Erd\H{o}s conjectured that a result analogous to \eqref{eq:erdosLO} should hold in in higher dimensions.  This extension was non-trivial and it took two decades before such a result was verified by Kleitman \cite{kleitman1966combinatorial}.
	\begin{theorem} 
		Let $d \in \mathbb{N}$ and $\Bv_i \in \R^d$ with $\|\Bv_i\|_2 \leq1$ and $\Bv_i \neq 0$.  Then,
		\[
		\rho(\Bv_1, \dots, \Bv_n) := \sup_{\Bx \in \R^d} \P\left(\sum_{i=1}^n \eps_i \Bv_i = \Bx \right) \leq \frac{\binom{n}{\lfloor n/2 \rfloor}}{2^n}.
		\] 
	\end{theorem}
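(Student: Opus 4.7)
The plan is to reduce Kleitman's higher-dimensional inequality to Erd\H{o}s's one-dimensional bound \eqref{eq:erdosLO} by projecting the random sum onto a well-chosen line. For any $\Bu \in \R^d$ the implication $\sum_i \eps_i \Bv_i = \Bx \Rightarrow \sum_i \eps_i \langle \Bu, \Bv_i\rangle = \langle \Bu, \Bx\rangle$ is immediate, so setting $a_i := \langle \Bu, \Bv_i\rangle$ we obtain
\[
\P\left(\sum_{i=1}^n \eps_i \Bv_i = \Bx\right) \leq \P\left(\sum_{i=1}^n \eps_i a_i = \langle \Bu, \Bx\rangle \right) \leq \rho(a_1, \dots, a_n).
\]
It is therefore enough to produce a single $\Bu$ for which the projected scalars $a_i$ satisfy the hypothesis of \eqref{eq:erdosLO}, namely $|a_i|\leq 1$ and $a_i \neq 0$ for every $i$.

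First I would normalise $\|\Bu\|_2 = 1$, so that Cauchy-Schwarz automatically yields $|a_i| = |\langle \Bu, \Bv_i \rangle| \leq \|\Bv_i\|_2 \leq 1$. The remaining condition $a_i \neq 0$ is equivalent to $\Bu \notin \Bv_i^\perp$. Because $\Bv_i \neq 0$ by hypothesis, each $\Bv_i^\perp$ is a proper subspace of $\R^d$, and hence the finite union $\bigcup_{i=1}^n \Bv_i^\perp$ meets $S^{d-1}$ in a set of $(d{-}1)$-dimensional surface measure zero. A generic unit vector $\Bu$ therefore satisfies both requirements simultaneously, and applying \eqref{eq:erdosLO} to the $a_i$ closes the argument.

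The only real obstacle is confirming that such a $\Bu$ exists, which is essentially immediate from a dimension count once the hypothesis $\Bv_i \neq 0$ has been used to ensure that each $\Bv_i^\perp$ is a proper hyperplane; the same step is what would fail if we dropped the nondegeneracy assumption. I would also note that this projection-plus-avoidance scheme has nothing to do with the Euclidean structure: to adapt it to an arbitrary norm $\|\cdot\|$ one chooses $\Bu$ from the interior of the dual unit ball (replacing Cauchy-Schwarz by the duality inequality $|\Bu(\Bv)|\leq \|\Bv\|$) and once more avoids the finitely many hyperplanes $\{f : f(\Bv_i) = 0\}$. This is the mechanism I would expect to drive the extension of the main result of the paper from $\ell_2$ to any norm.
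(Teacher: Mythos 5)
Your proof is correct. The paper does not actually prove Kleitman's theorem itself --- it is cited from \cite{kleitman1966combinatorial} --- so there is no in-paper argument to compare against directly, but your technique coincides with the one the paper deploys for its main Theorem~\ref{thm:main}: project the $d$-dimensional signed sum along a single linear functional to reduce to a one-dimensional statement, apply the one-dimensional bound, and choose the functional so that no coefficient $\langle \Bu, \Bv_i\rangle$ vanishes. The paper handles that last degeneracy by a small perturbation of the maximizing dual vector; you handle it by the equivalent observation that a finite union of hyperplanes has measure zero on the sphere. Both work. Your closing observation --- that swapping Cauchy--Schwarz for the duality inequality $|\langle \Bu, \Bv\rangle| \leq \|\Bv\|\,\|\Bu\|_*$ and again avoiding the finitely many kernels extends the argument to any norm --- is precisely the mechanism behind Theorem~\ref{thm:main}; the only further ingredient there is choosing a norm-attaining dual functional together with the double-dual identity (Lemma~\ref{lem:doubledual}), which upgrades the uniform Kleitman bound to the nonuniform one.

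Two minor remarks. For the point-concentration form stated here, the normalization $\|\Bu\|_2 = 1$ and the Cauchy--Schwarz step are not actually needed: once $a_i \neq 0$, the probability $\P(\sum_i \eps_i a_i = x)$ is invariant under simultaneous rescaling of the $a_i$, so Erd\H{o}s's bound applies regardless of magnitudes. (The magnitude constraint $|a_i| \leq 1$ would matter for the stronger form of Kleitman's theorem about an open ball of radius $1$, where the projected target becomes an interval whose length depends on $\|\Bu\|_2$; your projection argument would need more care there.) Also, the paper's displayed inequality~\eqref{eq:erdosLO} omits the hypothesis $a_i \neq 0$, which is plainly necessary; you correctly supply it, matching Proposition~\ref{prop:1d}.
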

	Inspired by the inverse problems of additive combinatorics, Tao and Vu began a line of work to known as \emph{inverse} Littlewood-Offord  theorems which attempt to explain when $\rho(\Ba)$ is large \cite{taovu2009ILO}.  Essentially, they showed that $\rho(\Ba)$ is large only when the entries of $\Ba$ reside in a generalized arithmetic progression.  Many results in this direction followed and culminated in the optimal inverse Littlewood-Offord theorems of Nguyen and Vu \cite{nguyen2011optimal}.  This theory and its variants played an important role in estimating the singularity probability of random matrices (see \cite{taovu2009ILO, rv2008invertibility, ferber2019counting, tikhomirov2020bernoulli} and the references therein). 
	
	In another vein of work, Tiep and Vu \cite{tiep2016nonabelian} obtained a Littlewood-Offord-type inequality in the setting of non-commutative groups and Ju\v{s}kevi\v{c}ius and \v{S}emetulskis obtained optimal bounds for arbitrary groups.  Bandeira, Ferber and Kwan proposed a new perspective and investigated a resilience version of the Littlewood-Offord problem, namely the number of coefficients in $\Ba$ that an adversary can change to force $\rho(\Ba)$ to be large \cite{bandeira2017resilience}. 
	
	Recently, Dzindzalieta and Ju\v{s}kevi\v{c}ius established a non-uniform Littlewood-Offord inequality in all dimensions.  The bound is non-uniform in that it incorporates information about the vector $\Bx$.
	\begin{theorem} \label{thm:l2}
		Let $\Bv_i \in \R^d$ with $\|\Bv_i\|_2 \leq 1$ and $\Bv_i \neq 0$ for all $i \in [n]$.  Then,
		\[
		\P(\sum_{i=1}^n \eps_i \Bv_i = \Bx) \leq \P(R_n = k + \delta_{n,k}) = \frac{\binom{n}{\lceil \frac{n+k}{2} \rceil}}{2^n}.
		\]
		where $k = \lceil \|\Bx\|_2 \rceil$ and 
		\[
		\delta_{n,k} = 
		\begin{cases}
		1 & n+k \text{ is even} \\
		0 & \text{otherwise}. 
		\end{cases}
		\]
	\end{theorem}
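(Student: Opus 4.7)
The plan is to reduce the problem to a one-dimensional non-uniform Littlewood-Offord inequality by projecting onto a well-chosen dual functional, and then prove the 1D statement by induction on $n$.

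For the reduction, set $k = \lceil \|\Bx\| \rceil$ and assume $\Bx \neq 0$ (the case $\Bx = 0$ is the Erd\H{o}s-Kleitman bound already stated). By definition of the dual norm $\|\cdot\|_*$, the set $U := \{\Bu : \|\Bu\|_* = 1,\ \langle \Bx, \Bu \rangle > k - 1\}$ is a nonempty relatively open subset of the dual unit sphere. Each set $\{\Bu : \langle \Bv_i, \Bu \rangle = 0\}$ is a proper hyperplane because $\Bv_i \neq 0$, so one may choose $\Bu \in U$ avoiding all of these. Setting $a_i := \langle \Bv_i, \Bu \rangle$ gives $0 < |a_i| \leq \|\Bv_i\|\|\Bu\|_* \leq 1$; pairing $\sum \eps_i \Bv_i = \Bx$ with $\Bu$ yields the scalar identity $\sum \eps_i a_i = \langle \Bx, \Bu \rangle$, and $\lceil \langle \Bx, \Bu \rangle \rceil = k$ by the choice of $U$. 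Therefore
\[
\P\!\left(\sum_i \eps_i \Bv_i = \Bx\right) \leq \P\!\left(\sum_i \eps_i a_i = \langle \Bx, \Bu \rangle\right),
\]
and it suffices to bound the right side by $\binom{n}{\lceil (n+k)/2 \rceil}/2^n$. Because this argument uses only the dual norm and the hypothesis $\Bv_i \neq 0$, it is norm-agnostic.

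For the 1D claim (for $0 < |a_i| \leq 1$ and $y \geq 0$, $\P(\sum \eps_i a_i = y) \leq \binom{n}{\lceil (n + \lceil y \rceil)/2 \rceil}/2^n$), I would induct on $n$. By flipping $\eps_i \mapsto -\eps_i$ for $a_i < 0$ and by the symmetry $y \leftrightarrow -y$, assume $a_i > 0$ and $y \geq 0$. The rescaling $a_i \mapsto a_i/c$, $y \mapsto y/c$ with $c = \max_j a_j \leq 1$ preserves the probability and can only strengthen the target bound (which is non-increasing in $y$), so we may assume $\max_i a_i = 1$ and relabel $a_n = 1$. Writing $\P_m(z) := \P(\sum_{i=1}^m \eps_i a_i = z)$ and conditioning on $\eps_n$,
\[
\P_n(y) = \tfrac{1}{2}\,\P_{n-1}(y - 1) + \tfrac{1}{2}\,\P_{n-1}(y + 1).
\]
For $y \geq 1$, setting $k = \lceil y \rceil$ one has $\lceil y - 1 \rceil = k - 1$ and $\lceil y + 1 \rceil = k + 1$, so the inductive hypothesis yields two consecutive binomial coefficients $\binom{n-1}{m-1}$ and $\binom{n-1}{m}$ with $m = \lceil (n+k)/2 \rceil$, which combine via Pascal's identity to the target $\binom{n}{m}$. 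For $0 \leq y < 1$, the arguments $y \pm 1$ land in $[-1,0)$ and $[1,2)$ and Pascal does not cancel exactly; the required inequality reduces in each parity subcase to the elementary bound $\binom{n-1}{\lceil n/2 \rceil} \leq \binom{n-1}{\lceil (n-1)/2 \rceil}$, which follows from the unimodality of binomial coefficients.

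The main obstacle is the 1D inductive step. The rescaling to $\max_i a_i = 1$ is essential: without it, the induction in the range $0 \leq y < 1$ produces a bound strictly larger than the target, because Pascal's identity only fires cleanly when the peeled-off coordinate equals $1$. The reduction in the first step is also delicate: the 1D inequality fails if some $a_i = 0$ is permitted (e.g.\ $a_1 = 1$, $a_2 = \cdots = a_n = 0$, $y = 1$ gives probability $1/2$), so the perturbation that avoids the hyperplanes $\{\Bu : \langle \Bv_i, \Bu \rangle = 0\}$ is precisely where the hypothesis $\Bv_i \neq 0$ is actually used.
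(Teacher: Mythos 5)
Your reduction to one dimension is essentially the same as the paper's: project onto a norming functional $\Bu$ with $\|\Bu\|_*\le 1$ that (nearly) attains $\langle\Bx,\Bu\rangle=\|\Bx\|_{**}=\|\Bx\|$, observe $|\langle\Bv_i,\Bu\rangle|\le\|\Bv_i\|\,\|\Bu\|_*\le 1$, and reduce to the scalar estimate. You handle the degenerate case $\langle\Bv_i,\Bu\rangle=0$ a little more cleanly than the paper does (the paper chooses the exact maximizer and then gestures at ``a small perturbation,'' while you bake the perturbation in from the start by choosing $\Bu$ from the relatively open set $\{\|\Bu\|_*=1,\ \langle\Bx,\Bu\rangle>k-1\}$ and then off the finitely many hyperplanes $\langle\Bv_i,\Bu\rangle=0$); both are correct, and both prove the arbitrary-norm statement, of which the $\ell_2$ case is the special instance asked for here.

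The genuine difference is that the paper treats the one-dimensional non-uniform bound (Proposition~\ref{prop:1d}) as a black box cited from Dzindzalieta--Ju\v{s}kevi\v{c}ius, whereas you sketch a self-contained proof of it. Your sketch is sound: the sign flips and the rescaling by $c=\max_j a_j$ (legitimate because the target $\binom{n}{\lceil (n+\lceil y\rceil)/2\rceil}/2^n$ is non-increasing in $y\ge 0$) reduce to $\max a_i=1$, and then conditioning on the coordinate equal to $1$ gives $\P_n(y)=\tfrac12\P_{n-1}(y-1)+\tfrac12\P_{n-1}(y+1)$. For $y\ge 1$ the two IH bounds sit at indices $\lceil(n+k)/2\rceil-1$ and $\lceil(n+k)/2\rceil$ and combine via Pascal; for $0\le y<1$ the leftover comparison $\binom{n-1}{\lceil n/2\rceil}\le\binom{n-1}{\lceil(n+1)/2\rceil-1}$ (equality for $n$ even, unimodality for $n$ odd) closes the argument. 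You are also right that the normalization $\max a_i=1$ is where the induction would break without it: peeling off $a_n<1$ at small $y$ keeps both $\lceil |y\pm a_n|\rceil=1$, producing a doubled $\binom{n-1}{\lceil n/2\rceil}$ that is strictly larger than $\binom{n}{\lceil(n+1)/2\rceil}$ for even $n$. So your proposal is correct and adds an ingredient the paper defers to a citation. One minor remark unrelated to your argument: the statement of Theorem~\ref{thm:l2} in the paper has the two cases of $\delta_{n,k}$ swapped (compare with Theorem~\ref{thm:main}); the intended value is $\delta_{n,k}=0$ when $n+k$ is even, which is what makes the displayed equality $\P(R_n=k+\delta_{n,k})=\binom{n}{\lceil(n+k)/2\rceil}/2^n$ hold, and your argument targets that correct quantity directly.
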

	This result is optimal in $n$ and $\|\Bx\|_2$ as can be seen by setting $\Bv_i = (\frac{\|\Bx\|_2}{k + \delta_{n,k}}, 0 , \dots, 0)$.  In \cite{dzindzalieta2020nonuniform}, it was conjectured that the result should hold for any norm on $\R^d$, not just the $\ell_2$ norm.  
	
	\begin{conjecture} \cite[Conjecture 2]{dzindzalieta2020nonuniform} \label{conj:main}
		Let $\|\cdot \|$ be an arbitrary norm on $\R^d$.  Let $\Bv_i \in \R^d$ be such that $\|\Bv_i\| \leq 1$ and $\Bv_i \neq 0$ for all $i \in [n]$.  Then,
		\[
		\P\left( \sum \eps_i \Bv_i = \Bx \right) \leq \P(R_n = k + \delta_{n,k}).
		\]
	\end{conjecture}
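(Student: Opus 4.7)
My plan is to reduce the multi-dimensional non-uniform problem to two known ingredients---the one-dimensional case of Theorem \ref{thm:l2} and Kleitman's classical uniform bound---glued together by a Hahn--Banach projection and a Vandermonde-type binomial estimate. First, by Hahn--Banach, choose a linear functional $\phi:\R^d\to\R$ with dual norm at most $1$ and $\phi(\Bx)=\|\Bx\|$, and set $a_i:=\phi(\Bv_i)\in[-1,1]$. Since $\{\sum_i\eps_i\Bv_i=\Bx\}$ implies $\{\sum_i\eps_i a_i=\|\Bx\|\}$, if every $a_i$ were nonzero then the one-dimensional case of Theorem \ref{thm:l2} would finish immediately. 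The main obstruction is that some $a_i$ may vanish---for instance, the $\ell_2$ Hahn--Banach functional for $\Bx=e_1$ annihilates any $\Bv_i$ perpendicular to $\Bx$---and the 1D theorem genuinely fails for sequences with zero coefficients (simple $n=3$ examples violate the bound $\binom{n}{\lceil(n+k)/2\rceil}/2^n$ already when one coefficient is zero).

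To handle this, split $[n]=J\sqcup I$ with $J=\{i:a_i\neq 0\}$ and $I=\{i:a_i=0\}$, noting that $\{\Bv_i\}_{i\in I}\subset\ker\phi$. Conditioning on the coordinates in $J$,
\[
\P\!\left(\sum_{i=1}^n \eps_i \Bv_i = \Bx\right) = 2^{-|J|}\sum_{\omega\in\{\pm 1\}^{J}}\P\!\left(\sum_{i\in I}\eps_i \Bv_i = \Bx - \sum_{j\in J}\omega_j\Bv_j\right).
\]
For the inner probability to be nonzero the right-hand side must lie in $\ker\phi$, which forces $\sum_{j\in J}\omega_j a_j=\|\Bx\|$. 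For each such $\omega$, the inner event is a multi-dimensional Littlewood--Offord problem inside the hyperplane $\ker\phi$ for nonzero vectors, so Kleitman's theorem bounds it by $\binom{|I|}{\lfloor|I|/2\rfloor}/2^{|I|}$. Meanwhile, the number of $\omega$ satisfying the one-dimensional equation $\sum_{j\in J}\omega_j a_j=\|\Bx\|$ is at most $\binom{|J|}{\lceil(|J|+k)/2\rceil}$ by the one-dimensional case of Theorem \ref{thm:l2}, applicable because the $a_j$ are nonzero reals with $|a_j|\leq 1$ and $\|\Bx\|$ has ceiling $k$. Multiplying yields
\[
\P\!\left(\sum_i\eps_i\Bv_i=\Bx\right)\leq \frac{\binom{|J|}{\lceil(|J|+k)/2\rceil}\,\binom{|I|}{\lfloor|I|/2\rfloor}}{2^n}.
\]

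The remaining step is to bound this by $\binom{n}{\lceil(n+k)/2\rceil}/2^n$. This follows from Vandermonde's identity $\binom{n}{m}=\sum_{j+i=m}\binom{|J|}{j}\binom{|I|}{i}$, which gives $\binom{|J|}{j}\binom{|I|}{i}\leq\binom{n}{j+i}$, combined with the observation that for $j=\lceil(|J|+k)/2\rceil$ and $i=\lceil|I|/2\rceil$ one has $j+i\geq(n+k)/2$, hence $j+i\geq\lceil(n+k)/2\rceil$ by integrality; unimodality of the binomial row then yields $\binom{n}{j+i}\leq\binom{n}{\lceil(n+k)/2\rceil}$. Verifying this final binomial monotonicity carefully (and checking the edge cases $J=\emptyset$, which forces $\Bx=0$ and reduces directly to Kleitman, and $I=\emptyset$, where the 1D projection alone finishes the job) is the only remaining technical point; the conceptual content of the proof is the split into a 1D non-uniform problem along $\phi$ and a lower-dimensional uniform Littlewood--Offord problem inside $\ker\phi$.
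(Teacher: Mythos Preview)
Your argument is correct. Both your proof and the paper's begin with the same Hahn--Banach step: choose a norming functional $\phi$ (equivalently a dual unit vector $\By$) with $\phi(\Bx)=\|\Bx\|$, so that $\sum_i\eps_i\Bv_i=\Bx$ forces the one-dimensional equation $\sum_i\eps_i\phi(\Bv_i)=\|\Bx\|$ with $|\phi(\Bv_i)|\le 1$, and then invoke Proposition~\ref{prop:1d}. The divergence is entirely in how the degenerate coefficients $\phi(\Bv_i)=0$ are handled. The paper disposes of them in one line by perturbing $\By$ generically so that every $\langle\Bv_i,\By\rangle$ becomes nonzero while $\lceil\langle\Bx,\By\rangle\rceil$ remains equal to $k$; this keeps the argument entirely within the one-dimensional result and never touches Kleitman's theorem. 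Your route instead freezes $\By$, splits off the degenerate coordinates, applies Kleitman's uniform bound inside $\ker\phi$, and recombines via the Vandermonde inequality $\binom{|J|}{j}\binom{|I|}{i}\le\binom{n}{j+i}$ together with unimodality (the switch from $\lfloor|I|/2\rfloor$ to $\lceil|I|/2\rceil$ via symmetry being exactly what makes $j+i\ge\lceil(n+k)/2\rceil$ go through). The paper's approach is shorter and uses one fewer black box; yours is more explicit about the degeneracy and sidesteps the small analytic care the perturbation needs (all inner products nonzero, ceiling preserved, coefficients still bounded by $1$), at the price of importing Kleitman's theorem and a binomial computation.
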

	In \cite{dzindzalieta2020nonuniform}, they used a rotation argument to reduce the multi-dimensional case to the one dimensional case.  However, their rotation only preserves the $\ell_2$ norm and so their argument only applies to this norm.  
	In this short note, we provide an alternate proof of the main result in \cite{dzindzalieta2020nonuniform} and prove Conjecture \ref{conj:main}.
	\begin{theorem} \label{thm:main}
		Let $\|\cdot \|$ be an arbitrary norm on $\R^d$.  Let $\Bv_i \in \R^d$ be such that $\|\Bv_i\| \leq 1$ and $\Bv_i \neq 0$ for all $i \in [n]$.  Then,
		\[
		\P\left( \sum \eps_i \Bv_i = \Bx \right) \leq \P(R_n = k + \delta_{n,k}).
		\]
		where $k = \lceil \|\Bx\| \rceil$ and 
		\[
		\delta_{n,k} = 
		\begin{cases}
		0 & n+k \text{ is even} \\
		1 & \text{otherwise}. 
		\end{cases}
		\]
	\end{theorem}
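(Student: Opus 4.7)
The plan is to reduce to the one-dimensional non-uniform Littlewood-Offord inequality (which is already covered by Theorem~\ref{thm:l2} at $d=1$) using a Hahn-Banach argument. By Hahn-Banach there exists a linear functional $f:\R^d\to\R$ with $f(\Bx)=\|\Bx\|$ and $|f(\Bv)|\leq\|\Bv\|$ for all $\Bv\in\R^d$. Setting $a_i:=f(\Bv_i)$ and applying $f$ to $\sum\eps_i\Bv_i=\Bx$ produces the necessary scalar identity $\sum\eps_i a_i=\|\Bx\|$, with $|a_i|\leq 1$. If every $a_i$ were nonzero, the $d=1$ case of Theorem~\ref{thm:l2} would immediately yield the desired bound $\P(R_n=k+\delta_{n,k})$.

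The main obstacle is that Hahn-Banach discards all information about the index set $T:=\{i:f(\Bv_i)=0\}$, which is generally nonempty (for instance when the unit ball of $\|\cdot\|$ has a flat face containing $\Bx/\|\Bx\|$). Applying the one-dimensional bound only to the $n':=n-|T|$ nonzero coefficients yields $\binom{n'}{j'}/2^{n'}$, which in general exceeds the target. To recover the loss, we exploit the extra cancellation constraint inside $\ker f$. With $m:=|T|$, conditioning on $(\eps_i)_{i\notin T}$, using the independence of the two groups of signs, and noting that $f(\sum\eps_i\Bv_i)=f(\Bx)$ is a necessary condition, gives
\[
\P\Bigl(\sum\eps_i\Bv_i=\Bx\Bigr)\;\leq\;\P\Bigl(\sum_{i\notin T}\eps_i a_i=\|\Bx\|\Bigr)\cdot\sup_{\Bw\in\ker f}\P\Bigl(\sum_{i\in T}\eps_i\Bv_i=\Bw\Bigr).
\]
The first factor is at most $\binom{n'}{j'}/2^{n'}$ with $j'=(n'+k+\delta_{n',k})/2$ by the $d=1$ case of Theorem~\ref{thm:l2} applied to the nonzero scalars $a_i$. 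The second factor is at most $\binom{m}{\lfloor m/2\rfloor}/2^m$ by Kleitman's uniform Littlewood-Offord bound, which needs only that the $\Bv_i$ for $i\in T$ are nonzero: project onto a generic linear functional on $\ker f$ which avoids all $\Bv_i$ and apply Erd\H{o}s's one-dimensional inequality.

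The proof concludes with the combinatorial inequality $\binom{m}{\lfloor m/2\rfloor}\binom{n'}{j'}\leq\binom{n}{j}$, where $j=(n+k+\delta_{n,k})/2$. This is obtained by extracting one term from the Vandermonde identity $\binom{n}{j}=\sum_\ell\binom{m}{\ell}\binom{n'}{j-\ell}$: taking $\ell=j-j'=(m+\delta_{n,k}-\delta_{n',k})/2$, a case analysis on whether $m$ is even or odd (using $\delta_{n,k}-\delta_{n',k}\equiv m\pmod{2}$, which follows from the defining parities $n+k+\delta_{n,k}\equiv n'+k+\delta_{n',k}\equiv 0\pmod 2$) shows $\ell\in\{\lfloor m/2\rfloor,\lceil m/2\rceil\}$, so $\binom{m}{\ell}=\binom{m}{\lfloor m/2\rfloor}$ by symmetry of the binomial coefficient. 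Dividing through by $2^n$ yields the claimed bound $\P(R_n=k+\delta_{n,k})$.
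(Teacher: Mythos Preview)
Your proof is correct, and the core reduction is exactly the paper's: choose a norming functional $f$ for $\Bx$ (the paper phrases this via the dual norm and Lemma~\ref{lem:doubledual}), push the vector equation through $f$, and invoke the one-dimensional nonuniform bound (Proposition~\ref{prop:1d}) on the scalars $a_i=f(\Bv_i)$ with target $\|\Bx\|$.

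Where you diverge is in handling the degenerate set $T=\{i:f(\Bv_i)=0\}$. The paper dispatches this in one line: perturb $\By$ slightly inside the dual unit ball so that every $\langle \Bv_i,\By\rangle$ becomes nonzero while $\lceil\langle \Bx,\By\rangle\rceil$ remains equal to $k$, after which Proposition~\ref{prop:1d} applies directly with all $n$ coefficients. You instead keep the original functional, factor the probability as
\[
\P\Bigl(\sum_{i\notin T}\eps_i a_i=\|\Bx\|\Bigr)\cdot \sup_{\Bw}\P\Bigl(\sum_{i\in T}\eps_i\Bv_i=\Bw\Bigr),
\]
bound the second factor by the uniform Erd\H{o}s--Kleitman maximum $\binom{m}{\lfloor m/2\rfloor}/2^m$ (via a generic projection), and recombine with the nonuniform bound on the $n'$ nonzero terms by extracting a single term from the Vandermonde convolution $\binom{n}{j}=\sum_\ell\binom{m}{\ell}\binom{n'}{j-\ell}$. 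The parity check $\ell=j-j'\in\{\lfloor m/2\rfloor,\lceil m/2\rceil\}$ is correct. Your route is longer but fully self-contained and sidesteps the (routine but unwritten) verification that a suitable perturbation of $\By$ exists; the paper's route is shorter but leans on that verification. Both land on $\P(R_n=k+\delta_{n,k})$.
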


	\section*{Acknowledgements}
	We thank Victor Reis and Aleksei Kulikov for pointing out an error in our first draft.  We also thank Aleksei for suggesting the perturbation argument at the end of the note.    
	
	\section{Auxiliary Results}
	We will make use of the following one dimensional non-uniform Littlewood-Offord bound.
	\begin{proposition} \cite{dzindzalieta2020nonuniform}\label{prop:1d}
		For non-zero $a_i \in \R$ such that $|a_i|\leq 1$, we have that
		\[
		\P(\sum_{i=1}^n \eps_i a_i = x) \leq \P(R_n = k + \delta_{n,k})
		\]
		where $k = \lceil |x| \rceil$.
	\end{proposition}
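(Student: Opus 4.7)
The plan is induction on $n$, preceded by three WLOG reductions that arrange $a_n = 1$ and thereby make the inductive step close cleanly.

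First I would reduce to $a_i > 0$, $x \ge 0$, and $\max_i a_i = 1$. The first two are immediate: flip signs of $\eps_i$ whenever $a_i < 0$, and use the distributional symmetry $\eps_i \sim -\eps_i$. For the third, rescale all $a_i$ and $x$ by $c := \max_i a_i \le 1$; this weakly enlarges $|x|$ to $|x|/c$, hence weakly enlarges the ceiling $k = \lceil |x| \rceil$, and since $\P(R_n = k + \delta_{n,k})$ is the mass of $R_n$ at the smallest same-parity integer at least $k$, and $j \mapsto \P(R_n = j)$ is decreasing in $|j|$ on parity-$n$ integers, the rescaled bound is no larger than the original one.

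Having reordered so that $a_n = 1$, I would condition on $\eps_n$ to write
\[
\P\Bigl(\sum_{i=1}^n \eps_i a_i = x\Bigr) = \tfrac{1}{2}\bigl[\P(S_{n-1} = x-1) + \P(S_{n-1} = x+1)\bigr],
\]
where $S_{n-1} = \sum_{i<n} \eps_i a_i$. Applying the induction hypothesis to the $(n-1)$-variable problem at both target values yields $\P(S_{n-1} = x \mp 1) \le \P(R_{n-1} = m_\mp)$ with $m_\mp := \lceil |x \mp 1| \rceil + \delta_{n-1, \lceil |x \mp 1| \rceil}$. Combined with the Pascal recursion $\P(R_n = m) = \tfrac{1}{2}[\P(R_{n-1} = m-1) + \P(R_{n-1} = m+1)]$ for $m = k + \delta_{n,k}$, the statement reduces to the elementary inequality
\[
\P(R_{n-1} = m_-) + \P(R_{n-1} = m_+) \;\le\; \P(R_{n-1} = m-1) + \P(R_{n-1} = m+1).
\]

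The hard part, and the reason for pushing $a_n = 1$ in the reduction step, is verifying this last inequality through a short case analysis on whether $x \ge 1$ or $x \in [0,1)$ and on the parity of $n + k$. The point is that with $a_n = 1$ the parity-shifts $\delta_{n-1, \cdot}$ line up with $\delta_{n, \cdot}$: in the generic regime $x \ge 1$ one checks directly that $\{m_-, m_+\} = \{m-1, m+1\}$, so equality holds; in certain boundary sub-cases with $x < 1$ the pair $(m_-, m_+)$ collapses to a single integer $j$ with $|j| \ge \min(|m-1|, |m+1|)$, and the inequality follows from the unimodality of $\P(R_{n-1} = \cdot)$ on parity-$(n-1)$ integers. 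If one instead conditioned on a weight $a_n < 1$, the parity shifts would misalign and the naïve inductive bound can strictly overshoot (for example, $x = 1/2$, $n$ even, $a_n = 1/2$), so the reduction to $a_n = 1$ is essential rather than cosmetic.
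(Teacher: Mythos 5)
The note itself never proves Proposition \ref{prop:1d}: it is imported as a black box from Dzindzalieta and Ju\v{s}kevi\v{c}ius \cite{dzindzalieta2020nonuniform}, and the whole content of the note is the reduction of the arbitrary-norm statement to this cited one-dimensional bound. So there is no in-paper argument to compare yours against; what you have written is a self-contained proof of the cited ingredient. Your argument is essentially correct. The reductions go in the right direction: sign-flips and $x \mapsto -x$ are harmless by symmetry, and rescaling by $c = \max_i a_i$ only increases $\lceil |x| \rceil$, which only decreases the right-hand side since $j \mapsto \P(R_n = j)$ is nonincreasing over nonnegative integers of parity $n$. Conditioning on $\eps_n$ with $a_n = 1$ and Pascal's identity then reduce everything to your four-point comparison for $R_{n-1}$, and the case check closes: for $x \ge 1$ one has $\lceil x \pm 1 \rceil = k \pm 1$ and $\delta_{n-1, k\pm 1} = \delta_{n,k}$, so $\{m_-, m_+\} = \{m-1, m+1\}$ with equality; for $0 \le x < 1$ one gets either again $\{m-1,m+1\}$ (e.g.\ $0 < x < 1$ with $n$ even, where $(m_-,m_+) = (1,3)$, $m = 2$) or a collapse $m_- = m_+$. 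Your closing observation that conditioning on a weight $a_n < 1$ misaligns the parity shifts (so normalizing the largest weight to $1$ is essential) is correct and is the real point of the reduction step.

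Two small repairs are needed in the write-up. In the collapse sub-cases, the justification as stated --- $|j| \ge \min(|m-1|, |m+1|)$ plus unimodality --- is not a valid general principle: it only gives $2\P(R_{n-1} = j) \le 2\P(R_{n-1} = \text{closer point})$, which can exceed $\P(R_{n-1} = m-1) + \P(R_{n-1} = m+1)$. What actually holds in every collapse case ($x = 0$ with either parity, and $0 < x < 1$ with $n$ odd) is the stronger fact that $j = m+1$ and $|m-1| \le |m+1|$, so $\P(R_{n-1} = j) \le \P(R_{n-1} = m-1)$ by symmetry and monotonicity in $|\cdot|$, and the required inequality follows immediately; state it that way (equivalently, require $|j| \ge \max(|m-1|,|m+1|)$, which is what your cases satisfy). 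Also spell out the base case $n = 1$: there $k = \lceil |x| \rceil \le 1$ whenever the left side is nonzero, $\delta_{1,1} = 0$, and both sides equal $1/2$ (or the left side is $0$). With those two lines added, the induction is complete.
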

	
	We will also utilize the basic theory of dual norms.  
	\begin{definition}
		Let $\|\cdot\|$ be a norm on $\R^d$.  Let $\|\cdot \|_*$ denote the dual norm where for any $\Bu \in \R^d$,
		\[
		\|\Bu\|_* = \sup\{ \langle \Bu, \Bx \rangle: \|\Bx\|\leq 1\}
		\] 
		where $\langle \cdot, \cdot \rangle$ is the standard inner product on $\R^d$.
	\end{definition}
	We then have a basic Cauchy-Schwarz type inequality.  We include the elementary proof for the reader's convenience.  
	
	\begin{lemma} \label{lem:inequality}
		Let $\|\cdot\|$ be a norm on $\R^d$ and $\|\cdot \|_*$ be its dual.  Then, for $\Bx, \By \in \R^d$,
		\[
		|\langle \Bx, \By \rangle| \leq \|\Bx\| \|\By\|_*
		\]
	\end{lemma}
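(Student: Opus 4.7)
The plan is to reduce directly to the definition of the dual norm by a scaling trick. First I would handle the degenerate case $\Bx = 0$ separately, where both sides of the claimed inequality equal zero. For $\Bx \neq 0$, I would set $\Bz = \Bx/\|\Bx\|$, so that $\|\Bz\| = 1$ and in particular $\|\Bz\| \leq 1$. Then $\Bz$ is admissible in the supremum defining $\|\By\|_*$, and therefore $\langle \By, \Bz \rangle \leq \|\By\|_*$.

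To handle the absolute value on the left-hand side, I would apply the same reasoning to $-\Bz$, which also satisfies $\|-\Bz\| = 1 \leq 1$ (using homogeneity of the norm), to obtain $-\langle \By, \Bz \rangle = \langle \By, -\Bz \rangle \leq \|\By\|_*$. Combining the two inequalities gives $|\langle \By, \Bz \rangle| \leq \|\By\|_*$. Multiplying both sides by $\|\Bx\|$ and using bilinearity of the inner product yields $|\langle \Bx, \By \rangle| = \|\Bx\| \cdot |\langle \By, \Bz \rangle| \leq \|\Bx\| \|\By\|_*$, as desired.

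There is no real obstacle here: the lemma is essentially a restatement of the definition of the dual norm once one normalizes $\Bx$ and uses the symmetry $\Bz \leftrightarrow -\Bz$ to get the absolute value. The only thing one has to be mildly careful about is making sure the vector plugged into the supremum has norm at most $1$, which is precisely arranged by dividing by $\|\Bx\|$.
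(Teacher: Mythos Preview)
Your proof is correct and follows essentially the same approach as the paper: normalize $\Bx$ by its norm, use the resulting unit vector in the definition of $\|\By\|_*$, and handle the absolute value by applying the same reasoning to the negated vector. The only difference is that you explicitly separate out the case $\Bx = 0$, which the paper leaves implicit.
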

	
	\begin{proof}
		Let $\Bv = \Bx/\|\Bx\|$.  Then we have
		\begin{align*}
		\langle \Bx, \By \rangle &= \|\Bx\| \langle \Bv, \By \rangle \\
		&\leq   \|\Bx\| \|\By\|_*.
		\end{align*}
		To include the absolute value, we apply the same argument to $-\Bx$.  
	\end{proof}
	
	Additionally, we will make use of the standard fact that in finite-dimensional spaces, the double dual norm is the same as the original norm.
	\begin{lemma} (e.g. \cite[Theorem 1.11.9]{megginson2012introduction})\label{lem:doubledual}
		Let $\|\cdot\|$ be a norm on $\R^d$.  Then, for $\Bx \in \R^d$,
		\[
		\|\Bx \| = \|\Bx\|_{**}.
		\] 
	\end{lemma}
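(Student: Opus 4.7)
The plan is to prove $\|\Bx\| = \|\Bx\|_{**}$ by establishing the two one-sided inequalities separately. The direction $\|\Bx\|_{**} \leq \|\Bx\|$ is immediate from Lemma \ref{lem:inequality}: for any $\Bu \in \R^d$ with $\|\Bu\|_* \leq 1$, we have $\langle \Bx, \Bu \rangle \leq |\langle \Bx, \Bu \rangle| \leq \|\Bx\| \|\Bu\|_* \leq \|\Bx\|$, and taking the supremum over such $\Bu$ on the left-hand side yields $\|\Bx\|_{**} \leq \|\Bx\|$.

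The reverse inequality $\|\Bx\|_{**} \geq \|\Bx\|$ is the substantive half, and requires exhibiting a single dual vector $\Bu$ that nearly saturates the defining supremum. Assuming $\Bx \neq 0$ (otherwise both sides vanish), I would use the finite-dimensional Hahn--Banach theorem. First define a linear functional $f$ on the one-dimensional subspace $\mathrm{span}(\Bx)$ by $f(t\Bx) := t\|\Bx\|$; this satisfies $|f(\By)| = \|\By\|$ for every $\By \in \mathrm{span}(\Bx)$, so in particular $|f(\By)| \leq \|\By\|$ there. Extend $f$ to a linear functional $F$ on all of $\R^d$ preserving the domination $|F(\By)| \leq \|\By\|$, then apply the Riesz representation theorem on Euclidean $\R^d$ to write $F(\By) = \langle \Bu, \By \rangle$ for a unique $\Bu \in \R^d$. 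Unwinding definitions, the bound $F(\By) \leq \|\By\|$ gives $\|\Bu\|_* = \sup_{\|\By\| \leq 1} \langle \Bu, \By \rangle \leq 1$, while by construction $\langle \Bu, \Bx \rangle = F(\Bx) = f(\Bx) = \|\Bx\|$. Hence $\|\Bx\|_{**} \geq \langle \Bu, \Bx \rangle = \|\Bx\|$, closing the gap.

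The main obstacle is invoking the Hahn--Banach extension cleanly, but in finite dimensions there is a purely geometric substitute I would present instead: the closed unit ball $B = \{\By \in \R^d : \|\By\| \leq 1\}$ is a symmetric, compact, convex body, and $\Bx/\|\Bx\|$ lies on its boundary. The supporting hyperplane theorem then yields a nonzero linear functional attaining its maximum over $B$ at $\Bx/\|\Bx\|$; writing it as $\langle \Bu, \cdot \rangle$ and rescaling so that $\|\Bu\|_* = 1$ produces the same $\Bu$ as above. Since the lemma is a textbook fact cited to Megginson, the real challenge is not mathematical depth but presenting an argument that interfaces cleanly with the dual-norm notation used elsewhere in the note; everything else is bookkeeping.
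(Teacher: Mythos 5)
Your proof is correct, and since the paper offers no argument of its own for this lemma---it simply cites Megginson---there is nothing to diverge from: your Hahn--Banach extension (or, equivalently, the supporting-hyperplane argument for the symmetric convex unit ball) is exactly the standard proof behind the cited result, with the easy direction $\|\Bx\|_{**}\leq\|\Bx\|$ following from Lemma \ref{lem:inequality} as you say. Both halves check out, including the needed details (sublinearity of $\|\cdot\|$ as the dominating function, symmetry of the unit ball so the supremum defining $\|\cdot\|_*$ needs no absolute value, and positivity of $\|\Bu\|_*$ before rescaling).
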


	\section{Proof of Theorem \ref{thm:main}}
	\begin{proof}
		For any $\By \in \R^d$, we have that
		\[
		\P\left(\sum_{i=1}^n \eps_i \Bv_i = \Bx \right) \leq \P\left(\left\langle \sum_{i=1}^n \eps_i \Bv_i, \By \right\rangle =  \langle \Bx, \By \rangle\right)
		\]
		
		In particular, if we let $\By = \text{argmax}_{\|\Bu\|_*\leq 1} \langle \Bx, \Bu\rangle$, we can conclude that
		\begin{align*}
		\P\left(\sum_{i=1}^n \eps_i \Bv_i = \Bx \right) &\leq \P\left(\left\langle \sum_{i=1}^n \eps_i \Bv_i, \By \right\rangle =  \langle \Bx, \By\rangle \right) \\
		&= \P\left(\sum_{i=1}^n  \left\langle  \Bv_i, \By \right\rangle \eps_i =  \|\Bx\|_{**} \right) .\\
		\end{align*}
		
		Since $\|\Bv\| \leq 1$ by assumption, Lemma \ref{lem:inequality} implies that
		\[\left|\left\langle  \Bv_i, \By \right\rangle \right | \leq \|\Bv_i\| \|\By\|_*  \leq 1\].  
		
		Therefore, we can apply Proposition \ref{prop:1d}, so
		\[
		\P\left( \sum \eps_i \Bv_i = \Bx \right) \leq \P(R_n = k + \delta_{n,k})
		\]
		where $k = \lceil \|\Bx\|_{**} \rceil = \lceil \|\Bx\| \rceil$.  This final equality follows from Lemma \ref{lem:doubledual}.  In our application of Proposition \ref{prop:1d}, we implicitly assumed that $\langle  \Bv_i, \By \rangle \neq 0$.  To ensure this, we can simply choose a small perturbation of $\By$ such that $\lceil \langle \By, \Bx \rangle \rceil =  \lceil \|\Bx\|_{**} \rceil$.  
	\end{proof}

	\bibliographystyle{abbrv}
	\bibliography{nonuniform}

\begin{thebibliography}{10}

\bibitem{bandeira2017resilience}
A.~S. Bandeira, A.~Ferber, and M.~Kwan.
\newblock Resilience for the {L}ittlewood-{O}fford problem.
\newblock {\em Adv. Math.}, 319:292--312, 2017.

\bibitem{dzindzalieta2020nonuniform}
D.~Dzindzalieta and T.~Ju\v{s}kevi\v{c}ius.
\newblock A non-uniform {L}ittlewood-{O}fford inequality.
\newblock {\em Discrete Math.}, 343(7):111891, 5, 2020.

\bibitem{erdos1945LO}
P.~Erd\"{o}s.
\newblock On a lemma of {L}ittlewood and {O}fford.
\newblock {\em Bull. Amer. Math. Soc.}, 51:898--902, 1945.

\bibitem{erdos1947moser}
P.~Erdos and L.~Moser.
\newblock Elementary {P}roblems and {S}olutions: {S}olutions: {E}736.
\newblock {\em Amer. Math. Monthly}, 54(4):229--230, 1947.

\bibitem{ferber2019counting}
A.~Ferber, V.~Jain, K.~Luh, and W.~Samotij.
\newblock On the counting problem in inverse littlewood--offord theory.
\newblock {\em arXiv preprint arXiv:1904.10425}, 2019.

\bibitem{halasz1977combinatorial}
G.~Hal\'{a}sz.
\newblock Estimates for the concentration function of combinatorial number
  theory and probability.
\newblock {\em Period. Math. Hungar.}, 8(3-4):197--211, 1977.

\bibitem{kleitman1966combinatorial}
D.~J. Kleitman.
\newblock On a combinatorial conjecture of {E}rd{\H o}s.
\newblock {\em Journal of Combinatorial Theory}, 1(2):209--214, 1966.

\bibitem{LO1943roots}
J.~E. Littlewood and A.~C. Offord.
\newblock On the number of real roots of a random algebraic equation. {III}.
\newblock {\em Rec. Math. [Mat. Sbornik] N.S.}, 12(54):277--286, 1943.

\bibitem{megginson2012introduction}
R.~E. Megginson.
\newblock {\em An introduction to Banach space theory}, volume 183.
\newblock Springer Science \& Business Media, 2012.

\bibitem{nguyen2011optimal}
H.~Nguyen and V.~Vu.
\newblock Optimal inverse {L}ittlewood-{O}fford theorems.
\newblock {\em Adv. Math.}, 226(6):5298--5319, 2011.

\bibitem{rv2008invertibility}
M.~Rudelson and R.~Vershynin.
\newblock The {L}ittlewood-{O}fford problem and invertibility of random
  matrices.
\newblock {\em Adv. Math.}, 218(2):600--633, 2008.

\bibitem{stanley1980distinct}
R.~P. Stanley.
\newblock Weyl groups, the hard {L}efschetz theorem, and the {S}perner
  property.
\newblock {\em SIAM J. Algebraic Discrete Methods}, 1(2):168--184, 1980.

\bibitem{taovu2009ILO}
T.~Tao and V.~H. Vu.
\newblock Inverse {L}ittlewood-{O}fford theorems and the condition number of
  random discrete matrices.
\newblock {\em Ann. of Math. (2)}, 169(2):595--632, 2009.

\bibitem{tiep2016nonabelian}
P.~H. Tiep and V.~H. Vu.
\newblock Non-abelian {L}ittlewood-{O}fford inequalities.
\newblock {\em Adv. Math.}, 302:1233--1250, 2016.

\bibitem{tikhomirov2020bernoulli}
K.~Tikhomirov.
\newblock Singularity of random {B}ernoulli matrices.
\newblock {\em Ann. of Math. (2)}, 191(2):593--634, 2020.

\end{thebibliography}
	
\end{document}